\documentclass[sn-mathphys,Numbered]{sn-jnl}
 
\usepackage{graphicx}%
\usepackage{multirow}%
\usepackage{amsmath,amssymb,amsfonts}%
\usepackage{amsthm}%
\usepackage{mathrsfs}%
\usepackage[title]{appendix}%
\usepackage{xcolor}%
\usepackage{textcomp}%
\usepackage{manyfoot}%
\usepackage{booktabs}%
\usepackage{algorithm}%
\usepackage{algorithmicx}%
\usepackage{algpseudocode}%
\usepackage{listings}%

\theoremstyle{thmstyleone}%
\theoremstyle{thmstyletwo}%

\theoremstyle{thmstylethree}%

\usepackage{lineno}
 
\theoremstyle{definition}
\newtheorem{thm}{Theorem}[section]
\newtheorem{dfn}[thm]{Definition}
\newtheorem{pro}[thm]{Problem}

\newtheorem{lem}[thm]{Lemma}
\newtheorem{exa}[thm]{Example}

\newcommand{\R}{\mathbb R}

\newcommand{\al}{\alpha}
\newcommand{\be}{\beta}
\newcommand{\ga}{\gamma}

\newcommand{\de}{\delta}
\newcommand{\De}{\Delta}
\newcommand{\la}{\lambda}

\newcommand{\si}{\sigma}
\newcommand{\ti}{\tilde}
\newcommand{\ep}{\varepsilon}

\newcommand{\SO}{\mathrm{SO}}
\newcommand{\Or}{\mathrm{O}}
\newcommand{\vol}{\mathrm{vol}} 
\newcommand{\sign}{\mathrm{sign}}

\newcommand{\bpd}[2]{\dfrac{\partial #1}{\partial #2}}

\begin{document}

\title[The strength of a geometric simplex]{The strength of a geometric simplex}


\author[1]{\fnm{Olga} \sur{Anosova}}\email{oanosova@liv.ac.uk}

\author*[1]{\fnm{Vitaliy} \sur{Kurlin}}\email{vkurlin@liv.ac.uk}
\equalcont{}


\affil*[1]{\orgdiv{School of Computer Science and Informatics}, \orgname{University of Liverpool}, \orgaddress{\street{Ashton street}, \city{Liverpool}, \postcode{L69 3BX}, \country{UK}}}




\abstract{
The basic input for many real objects is a finite cloud of unordered points.
The strongest equivalence between objects in practice is rigid motion in a Euclidean space.
A recent polynomial-time classification of point clouds required a Lipschitz continuous function that vanishes on degenerate simplices, while the usual volume is not Lipschitz.
We define the strength of any geometric simplex and prove its continuity under perturbations with explicit bounds for Lipschitz constants.}

\keywords{unordered point cloud, rigid motion, complete classification, Lipschitz continuity, geometric simplex}


\pacs[MSC Classification]{51F20, 51N20, 51M25, 15A15}

\maketitle

\section{The importance of Lipschitz continuity for distinguishing mirror images under noise}
\label{sec:intro}

Many applications deal with point configurations or clouds of points obtained as edge pixels or feature points of objects across all scales from galaxies to molecules.
\medskip

Positions in a Euclidean space, such as atomic centers, are always uncertain due to measurement noise or thermal vibrations, see Feynman's lecture "Atoms in motion" \cite[chapter~1]{feynman2011lectures}. 
Molecular dynamics simulates trajectories of atomic clouds, which evolve in time. 
Machine learning tries to predict molecular properties that depend on atomic geometry.
These predicted outputs are expected to be independent of coordinate representations and remain stable under small perturbations of atomic positions.
\medskip

Though real objects are often symmetric, their noisy representations deviate from ideal symmetry.
For example, a narrow triangle can degenerate to a straight line and evolve to a mirror image of the original triangle of opposite (sign of) orientation.
\medskip


This discontinuity challenge of traditional representations for finite and periodic point sets was formalized and extended to the geo-mapping problem \cite[Problem~1.4.5]{anosova2025geometric}, which aims to continuously parameterize spaces of real data objects under practical equivalences by complete invariants similar to geographic coordinates on Earth.
\medskip

The space of triangles (clouds of 3 unordered points) under \emph{isometry} (any distance-preserving transformation) can be parameterized by inter-point distances $a,b,c$ as 
$$\{(a,b,c)\in\R^3 \mid 0<a\leq b\leq c\leq a+b\}\subset\R^3.$$
The emerging area of Geometric Data Science \cite{anosova2025geometric} aims to develop such continuous parameterizations for all spaces of real data under isometry and other equivalences.
\medskip

A crucial step towards such parameterizations is to guarantee their continuous change under noise.
The classical $\ep-\de$ continuity is very weak in the sense that all standard functions are continuous on domains, where they are defined.
\medskip

For example, $f(x)=\dfrac{1}{x}$ is continuous for $x\neq 0$ because, for any $\ep>0$, there is $\de=\dfrac{|x|}{2}\min\{1,\ep|x|\}$ such that if $|x-y|<\de$, then $|f(x)-f(y)|=\dfrac{|x-y|}{|xy|}\leq\dfrac{\de}{|xy|}\leq\dfrac{2\de}{x^2}\leq\ep$.
However, for small $x>0$, the chosen delta $\de$ is much smaller than $\ep$, so $f(x)=\dfrac{1}{x}$  grows too fast close to $0$.
The Lipschitz continuity below is more practical by restricting the growth of a function via a constant and an amount of perturbation.
\medskip

\begin{dfn}[Lipschitz continuity]
\label{dfn:Lipschitz continuity}
A function $f:\R^n\to\R$ is \emph{Lipschitz continuous}
if there is a \emph{Lipschitz constant} $\la>0$ such that
$|f(x)-f(y)|\leq\la|x-y|$ for any $x,y\in\R^n$, where $|x-y|$ denotes the Euclidean distance.
\end{dfn}
\medskip

Then $f(x)=\dfrac{1}{x}$ is not Lipschitz continuous because for any $\la>0$, we can set $c=\max\{1,\la\}$, $x=\dfrac{1}{2c}$ and $y=\dfrac{1}{c}$ such that $|f(x)-f(y)|=c\geq 1>\dfrac{\la}{2c}=\la|x-y|$.
\medskip

Though the Lipschitz continuity makes sense for maps between arbitrary metric spaces, we consider only scalar functions on subsets of $\R^n$.
\medskip

A simplex can be defined as a finite set of elements whose every subset is also a simplex.
We consider only geometric realizations of a simplex, still called a simplex. 
\medskip

\begin{dfn}[a geometric \emph{simplex} $T$ on any $n+1$ points in $\R^n$]
\label{dfn:geometric_simplex}
\textbf{(a)}
The (geometric) \emph{simplex} $T$ on any $n+1$ ordered points $p_0,\dots,p_n\in\R^n$ is the subset
$A=\left\{ \sum\limits_{i=0}^n t_i p_i \; \vline \; t_i\in[0,1],\;  \sum\limits_{i=0}^n t_i=1\right\}\subset\R^n$ with ordered vertices $p_0,\dots,p_n$.
\medskip

\noindent
\textbf{(b)}
An \emph{orientation} of a simplex $T$ is the sign of the determinant of the $n\times n$ matrix with the columns $p_1-p_0,\dots,p_n-p_0$, and is denoted by $\sign(T)$. 
\end{dfn}
\medskip

For $n=1$, the simplex on any points $p_0,p_1\in\R$ is the line segment connecting $p_0$ and $p_1$.
If points $p_0,\dots,p_n\in\R^n$ are \emph{affinely independent}, i.e. there is no $(n-1)$-dimensional affine subspace of $\R^n$ containing all $p_0,\dots,p_n$ and hence the simplex $T$, then $T$ is $n$-dimensional.
However, Definition~\ref{dfn:geometric_simplex} makes sense for any points. 
\medskip

The \emph{volume} $\vol(T)$ of a simplex $T$ or an arbitrary polyhedron is often used as a shape descriptor and detects affine independence in the sense that $T$ is \emph{degenerate} if and only if $\vol(T)=0$.
However, the volume and all other distance-based descriptors do not distinguish mirror images, which have different signs of orientation.
\medskip

When $T$ goes through a degenerate configuration, an orientation of $T$ can discontinuously change the sign.
This discontinuity is an obstacle to recognizing simplices (or more general clouds) that are nearly mirror-symmetric.
\medskip

One attempt to resolve this discontinuity is to consider the signed volume $\sign(T)\vol(T)$, because $\vol(T)$ vanishes only on degenerate simplices.
Unfortunately, $\vol(T)$ is not Lipschitz continuous in any dimension $n\geq 2$, as illustrated below.
\medskip

\begin{exa}[the area of a triangle is not Lipschitz continuous]
\label{exa:area_not_Lipschitz}
For any large $l>0$ and real $\ep$ close to $0$, let $T(l,\ep)\subset\R^2$ be the 2D simplex (triangle) on the vertices $(0,\ep)$ and $(\pm l,0)$.
The signed area $l\ep$ of $T(l,\ep)$ distinguishes mirror images $T(l,\pm\ep)$ but is not Lipschitz continuous under perturbations.
Indeed, as $\ep\to 0$, the triangle $T(l,\ep)$ degenerates to a straight line, while the area drops to 0 too quickly so that $\dfrac{\vol(T(l,\ep)) - \vol(T(l,0))}{\ep-0}=\dfrac{l\ep}{\ep}=l$ is not bounded.
Hence, if given points are not restricted to a fixed bounded region, a small change in their positions may lead to a large change in the area of a triangle, and similarly for the volume in $\R^n$.
\end{exa}
\medskip

\begin{pro}[Lipschitz continuous detection of degenerate simplices]
\label{pro:strength_simplex}
Find a Lipschitz continuous real-valued function $f(T)$ for all simplices $T$ on $n+1$ points $p_0,\dots,p_n\in\R^n$ such that 
$f(T)=0$ if and only if $T$ is degenerate.
\end{pro}


\section{The strength is a Lipschitz continuous invariant}
\label{sec:strength_simplex}

We solve Problem~\ref{pro:strength_simplex} by introducing the strength function in Definition~\ref{dfn:strength_simplex} and proving its Lipschitz continuity in Theorem~\ref{thm:strength_properties}.
\medskip

\begin{dfn}[the \emph{strength} $\si(T)$ and \emph{signed strength} $s(T)$ of a simplex $T\subset\R^n$]
\label{dfn:strength_simplex}
Let $T\subset\R^n$ be the simplex on any $n+1$ points $p_0,p_1,\dots,p_n$ in $\R^n$.
The \emph{half-perimeter} $p(T)=\dfrac{1}{2}\sum\limits_{i\neq j}|p_i-p_j|$ is one half of the sum of all distances between the vertices of $T$. 
The \emph{strength} of $T$ is $\si(T)=\dfrac{\vol^2(T)}{p^{2n-1}(T)}$.
The \emph{signed strength} is $s(T)=\sign(T)\si(T)$. 
\end{dfn}
\medskip

\begin{exa}[the strength of a line segment]
\label{exa:strength_segment}
For $n=1$, the simplex $T$ on any points $p_0,p_1\in\R$ is the line segment  with $\vol(T)=|p_1-p_0|=2p(T)$, the strength $\si(T)=\dfrac{\vol^2(T)}{p(T)}=2|p_1-p_0|$, and the signed strength $s(T)=2(p_1-p_0)$.
\end{exa}
\medskip

\begin{exa}[the strength of a triangle]
\label{exa:strength_triangle}
\textbf{(a)} 
Let $T\subset\R^2$ be a triangle with sides $a,b,c$.
Heron's formula for the area $\vol(T)=\sqrt{p(p-a)(p-b)(p-c)}$, where $p=\dfrac{a+b+c}{2}=p(T)$ is the half-perimeter, gives the strength $\si(T)=\dfrac{\vol^2(T)}{p^3(T)}=\dfrac{(p-a)(p-b)(p-c)}{p^2}$. 
The triangle $T(l,\ep)$ in Example~\ref{exa:area_not_Lipschitz} has $p=\dfrac{l+\ep+\sqrt{l^2+\ep^2}}{2}$.
Using $l\leq\sqrt{l^2+\ep^2}\leq l+\ep$, we can estimate the strength of $T(l,\ep)$ as follows: $$\si=\dfrac{(l-\ep+\sqrt{l^2+\ep^2})(\ep+\sqrt{l^2+\ep^2}-l)(l+\ep-\sqrt{l^2+\ep^2})}{2(l+\ep+\sqrt{l^2+\ep^2})^2}\leq  
\dfrac{2l\cdot 2\ep\cdot \ep}{2(2l)^2}\leq \dfrac{\ep^2}{2l}\leq\dfrac{\ep}{2}$$ for any $0\leq\ep\leq l$.
Hence, the strength of $T(l,\ep)$ is Lipschitz continuous. 
\medskip
 
\noindent
\textbf{(b)}
To visualize the strength $\si(T)$ of a triangle $T$, we assume that $0<a\leq b\leq c$ and normalize $T$ by the larger side $c$ to get the sides $\ti a=\dfrac{a}{c}\leq \ti b=\dfrac{b}{c}\leq \ti c=1$.
The resulting space of normalized triangles is parameterized by the coordinates $x=\dfrac{a}{c}$ and $y=1-\dfrac{b}{c}$ in the triangular region $\De=\{(x,y)\in\R^2 \mid x\in[0,1],\, x\geq y,\, x+y\leq 1\}$, where $x+y\leq 1$ means that $a\leq b$, while $x\geq y$ is equivalent to the triangle inequality $a+b\geq c$, see Fig.~\ref{fig:strength_plots}~(left).
The half-perimeter is $\ti p=\dfrac{1}{2}(\ti a+\ti b+\ti c)=\dfrac{1}{2}(x+(1-y)+1)=\dfrac{1}{2}(2+x-y)$.
Then $\ti p-\ti a=\dfrac{1}{2}(2-x-y)$,   
$\ti p-\ti b=\dfrac{1}{2}(x+y)$,
$\ti p-\ti c=\dfrac{1}{2}(x-y)$, and the strength is
$\si(x,y)=\dfrac{(2-x-y)(x^2-y^2)}{2(2+x-y)^2}$, see Fig.~\ref{fig:strength_plots}~(right).
\medskip

In the triangular region $\De$ in  Fig.~\ref{fig:strength_plots}~(left), the horizontal side $\{x\in(0,1],\, y=0\}$ represents all (normalized) isosceles triangles with $\ti a\leq \ti b=\ti c=1$ and strength $\si=\dfrac{(2-x)x^2}{2(2+x)^2}$ for $x\in[0,1]$.
The right hand side $\{x\in(0.5,1],\, x+y=1\}$ of the region $\De$ represents all (normalized) isosceles triangles with $\ti a=\ti b\leq \ti c=1$ and strength $\si=\dfrac{2x-1}{2(2x+1)^2}$ for $x\in[\frac{1}{2},1]$.
The vertex $(x,y)=(1,0)$ represents all equilateral triangles with side 1 of strength $\si=\dfrac{1}{18}$.
Any equilateral triangle $T$ with sides $a=b=c$ has the strength $\si(T)=\dfrac{(a^2\sqrt{3}/4)^2}{(3a/2)^3}=\dfrac{a}{18}$.
\end{exa}
\medskip

\begin{figure}[h!]
\includegraphics[height=34mm]{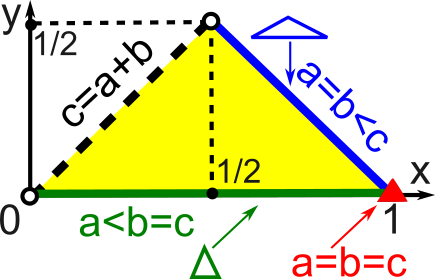}
\includegraphics[height=34mm]{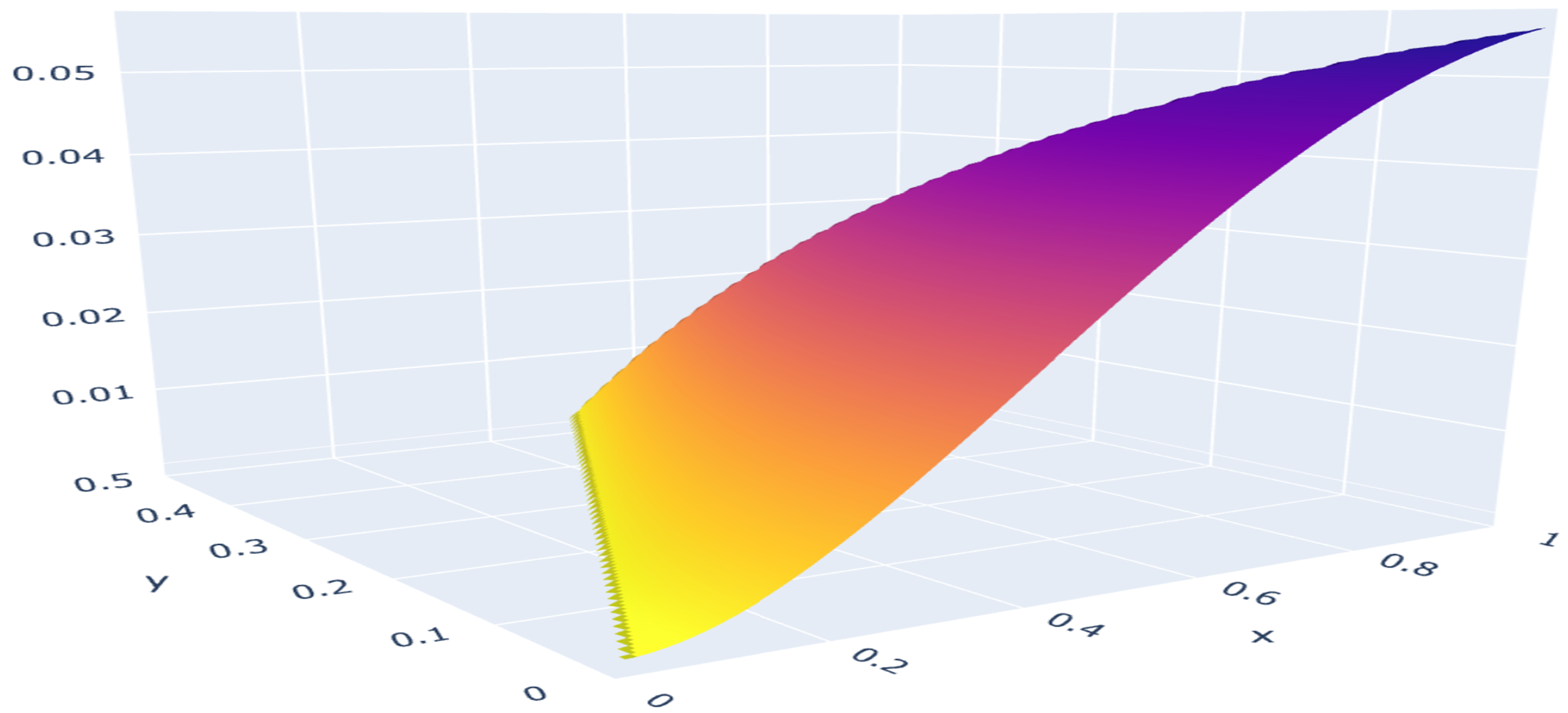}
\label{fig:strength_plots}
\caption{
\textbf{Left}: 
Example~\ref{exa:strength_triangle}(b) parameterizes the space of (normalized) triangles with sides $0<a\leq b\leq c$ by $x=\dfrac{a}{c}$ and $y=1-\dfrac{b}{c}$ over the region $\De=\{(x,y)\in\R^2 \mid x\in[0,1],\, x\geq y,\, x+y\leq 1\}$.
\textbf{Right}: 
the strength
$\si(x,y)=\dfrac{(2-x-y)(x^2-y^2)}{2(2+x-y)^2}$ of a normalized triangle over the region $\De$.} 
\end{figure}

\begin{figure}[h!]
\includegraphics[height=36mm]{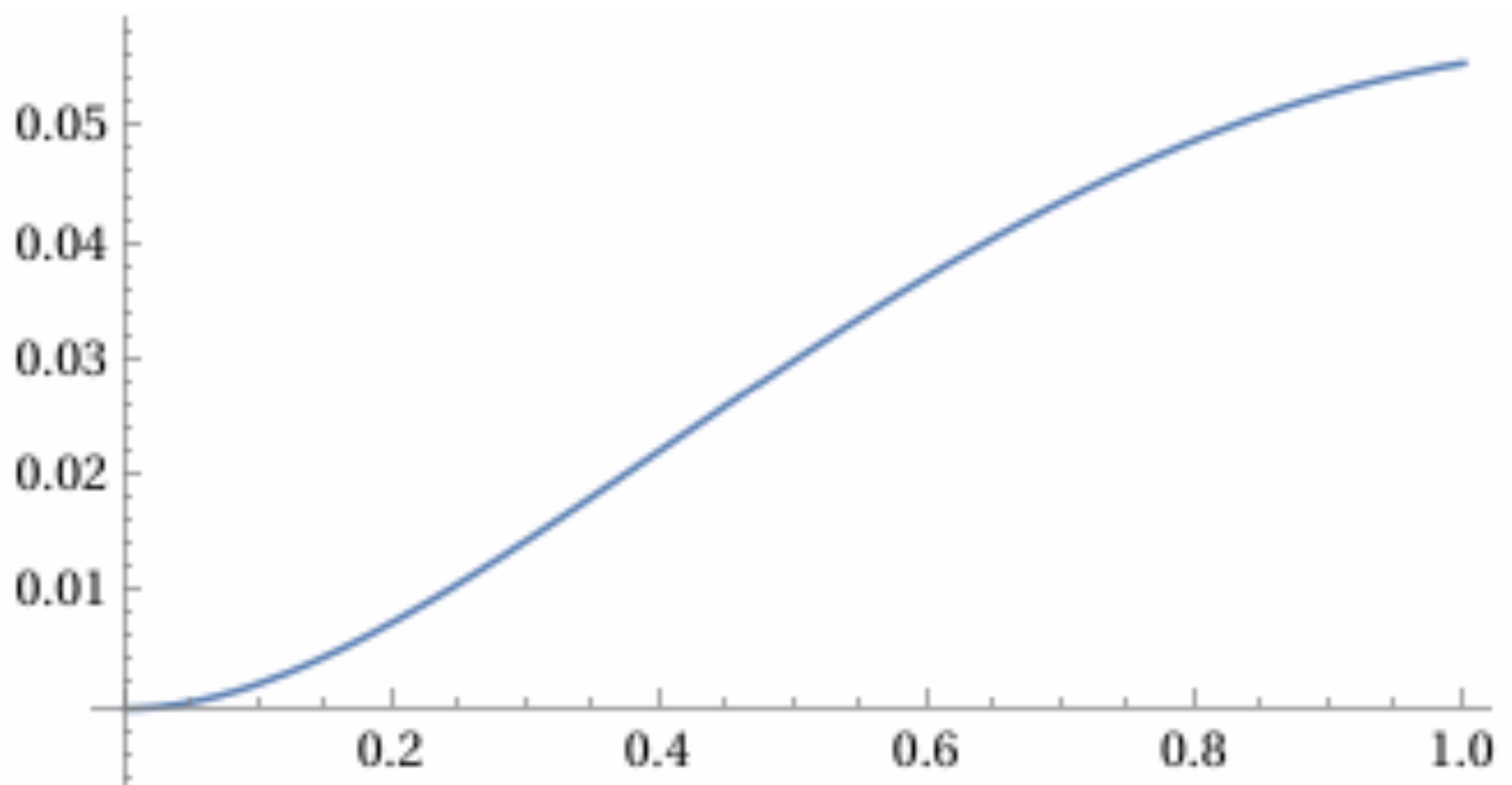}
\includegraphics[height=36mm]{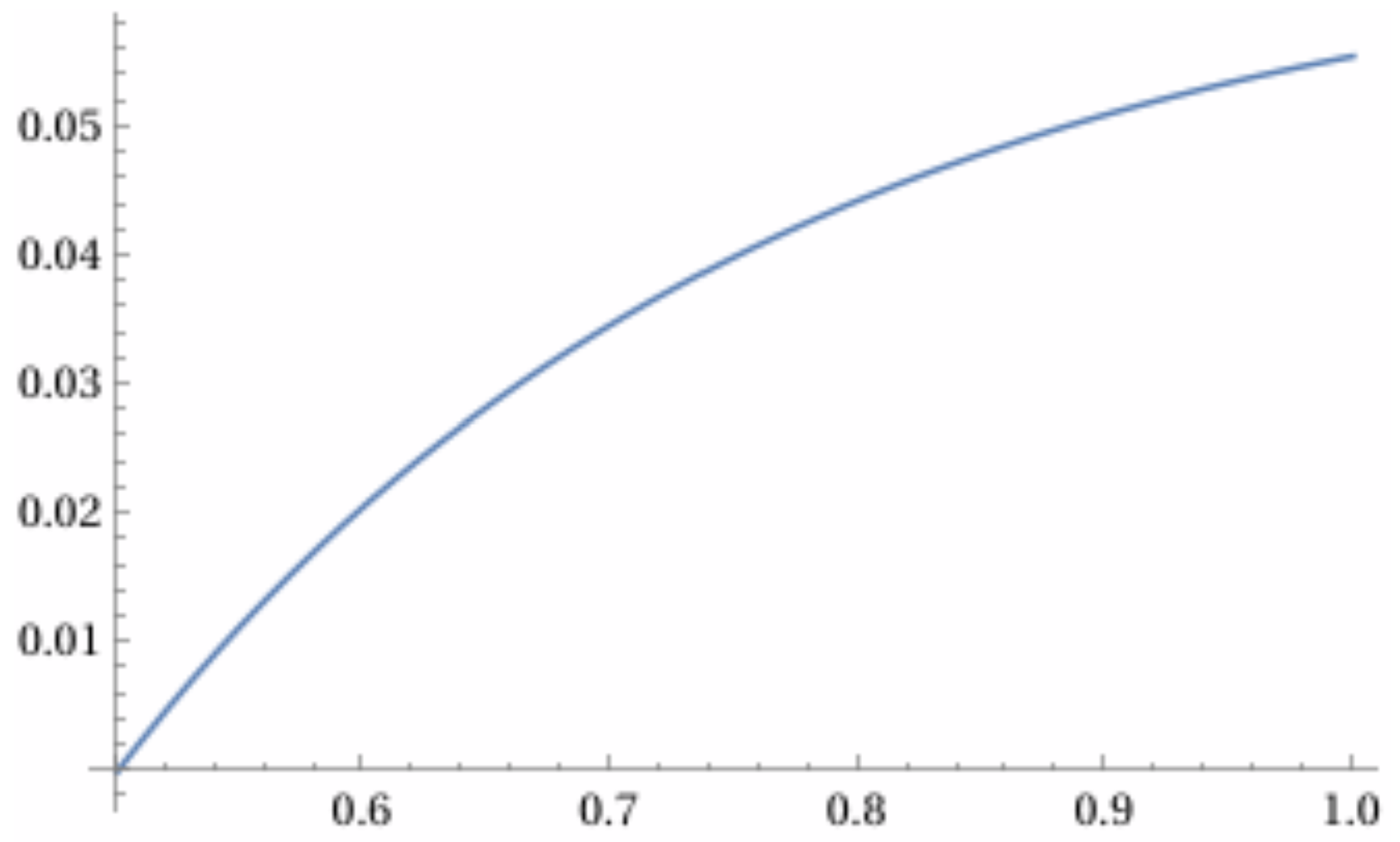}
\label{fig:strength_curves}
\caption{
Vertical sections of the strength $\si$ of normalized triangles parameterized by $x,y$ in Fig.~\ref{fig:strength_plots}.
\textbf{Left}: isosceles triangles with $\ti a=\ti b\leq \ti c=1$ have $y=0$ and $\si=\dfrac{(2-x)x^2}{2(2+x)^2}$ for $x\in[0,1]$. 
\textbf{Right}: isosceles triangles with sides $\ti a\leq \ti b=\ti c$ have $y=1-x$ and $\si=\dfrac{2x-1}{2(2x+1)^2}$ for $x\in[\frac{1}{2},1]$. }
\end{figure}

Recall that an \emph{isometry} is any distance-preserving transformation of $\R^n$, which decomposes into translations and orthogonal maps from the orthogonal group $\Or(\R^n)$.
\medskip

A \emph{rigid motion} is any composition of translations and rotations from the special orthogonal group $\SO(\R^n)$.
The strength of a simplex was essentially used to define a Lipschitz continuous metric on invariants of $n$-dimensional clouds of $m$ unordered points, which are complete under rigid motion in $\R^n$ and can be computed in a polynomial time of $m$, for a fixed dimension $n$ \cite[Theorem~4.7]{widdowson2023recognizing}, see details in \cite{kurlin2023strength}.
\medskip

All time complexities are considered in the RAM model of computation so that any value in computer memory can be accessed in a constant time.
\medskip

\begin{thm}[strength properties: invariance, complexity, and Lipschitz continuity]
\label{thm:strength_properties}
\textbf{(a)}
The strength $\si(T)$ and signed strength $s(T)$ of any simplex $T\subset\R^n$ are invariant under isometry and rigid motion in $\R^n$, respectively, and can be computed in time $O(n^3)$.
The uniform scaling of $\R^n$ by any factor $c>0$ multiplies $\si(T)$ and $s(T)$ by $c$.
\medskip

\noindent
\textbf{(b)}
Fix any dimension $n\geq 1$.
Then there is a constant $\la_n>0$ such that, for any $\ep>0$, if a simplex $Q$ is obtained from another simplex $T\subset\R^n$ by perturbing every vertex of $T$ within its $\ep$-neighborhood, then $|\si(T)-\si(Q)|\leq 2\la_n\ep$ and $|s(T)-s(Q)|\leq 2\la_n\ep$, where $\la_1=2$, $\la_2=\sqrt{3}$, and
$\la_n\leq \dfrac{2^{n+0.5}}{(n!) n^{2n-4}}$ for any $n\geq 3$, e.g. $\la_3<0.37$.
\end{thm}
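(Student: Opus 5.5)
For part (a), I will note that $\vol(T)=\frac{1}{n!}|\det M|$, where $M$ is the $n\times n$ matrix with columns $p_i-p_0$. Under any isometry $x\mapsto Rx+v$ with $R\in\Or(\R^n)$, the columns become $R(p_i-p_0)$, so $\det M$ is multiplied by $\det R=\pm1$. Hence $\vol(T)$, all pairwise distances and $p(T)$ are preserved, and $\si(T)$ is an isometry invariant. For a rigid motion $\det R=1$, so $\sign(T)$ is also preserved, and therefore $s(T)$ is invariant.

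For the complexity, $p(T)$ needs $O(n^2)$ distances, each computed in $O(n)$, which costs $O(n^3)$ in total. Gaussian elimination computes $\det M$ in $O(n^3)$.

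Scaling by $c$ multiplies $\vol$ by $c^n$ and $p$ by $c$. Thus $\si$ is multiplied by $c^{2n}/c^{2n-1}=c$, and the sign is unchanged.

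For part (b), I view $s$ as a function of the vector $(p_0,\dots,p_n)\in\R^{n(n+1)}$. Perturbing every vertex by at most $\ep$ moves this vector by at most $\sqrt{n+1}\,\ep$. However, I will instead integrate along the straight path $p_i(t)=p_i+t(q_i-p_i)$ and bound $|\frac{d}{dt}s|$ by $\la_n\sum_i|q_i-p_i|$-type terms, arriving at $2\la_n\ep$.

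The key observation is that $s=\frac{\det^2 M\,\sign\det M}{(n!)^2p^{2n-1}}$ is $C^1$ wherever $p>0$, including across degenerate simplices, because $D\mapsto D|D|$ is $C^1$ with derivative $2|D|$. This is exactly why the signed strength avoids the sign jump. The case $p=0$, where all points coincide, is handled by the bound $s\le C\,p$ (as shown below), which gives continuity at $0$. Also, the absolute value $|\si(T)-\si(Q)|\le|s(T)-s(Q)|$ holds whenever the signs are handled by the reverse triangle inequality, so it suffices to treat $s$.

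Differentiating, with $V=\vol$:
$$\Big|\tfrac{d}{dt}s\Big|\le \frac{2V|\dot V|}{p^{2n-1}}+(2n-1)\frac{V^2|\dot p|}{p^{2n}}.$$
Here $|\dot p|\le\frac12\sum_{i\ne j}(|\dot p_i|+|\dot p_j|)$ is at most $n\cdot\max_i|\dot p_i|$ per vertex, summed suitably. For $\dot V$, the derivative of the volume in the direction of moving vertex $p_i$ has magnitude $\frac1n\vol_{n-1}(F_i)$, where $F_i$ is the opposite facet.

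Both terms then reduce to bounds on the ratios $V/p^{n}$ and $\vol_{n-1}(F_i)/p^{n-1}$. I plan to bound these by a Hadamard-type inequality: $\vol_{n-1}(F_i)\le\frac{1}{(n-1)!}\prod$ of $n-1$ edge lengths from one vertex. By AM–GM, this is at most $\frac{1}{(n-1)!}\big(\frac{L}{n-1}\big)^{n-1}$ with $L$ a sum of edges, and $L\le 2p$ or a comparable fraction of $p$. Similarly, $V\le\frac{1}{n!}(\text{sum of }n\text{ edges}/n)^n$. Choosing the best vertex (the sum of edges at some vertex is at most $\frac{2\cdot 2p}{n+1}$ by averaging) should produce the $n^{2n-4}$ and $n!$ in the denominator.

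The cases $n=1,2$ I will check directly. For $n=1$, $s=2(p_1-p_0)$ has Lipschitz constant $2$ per coordinate, giving $|\Delta s|\le 2\cdot 2\ep$. For $n=2$, I will use Heron's form from Example~\ref{exa:strength_triangle} and maximize the gradient norm over the normalized triangle region $\De$ to get $\sqrt3$.

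The main obstacle is the general-$n$ constant. The crude Hadamard/AM–GM estimates must be combined carefully with the $(2n-1)$ factor and the sum over vertices so that the product lands at $\frac{2^{n+0.5}}{n!\,n^{2n-4}}$. I would first optimize the averaging choice of the base vertex, and accept a slightly weaker constant if the exact one does not fall out.
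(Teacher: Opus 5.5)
Part (a) is the paper's argument. You are slightly more careful, in fact: forming the $\binom{n+1}{2}$ distances costs $O(n^3)$, and you actually verify the scaling claim.

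For part (b) your route is genuinely different and sound. The paper stays in distance coordinates:
\begin{itemize}
\item it writes $\vol^2(T)$ via the Cayley--Menger determinant;
\item it bounds $\det\hat D/p^{2n}$ and $p^{-(2n-1)}\,\partial\det\hat D/\partial d_{ij}$ by counting the surviving derangement terms ($r_{n+2}$, resp.\ $r_n$ and $2r_{n+1}$) and estimating each monomial by $d_{kl}\le 2p/n$;
\item it applies the mean value theorem in the space of distances;
\item it treats $s$ separately, by an intermediate-value argument through a degenerate simplex $Q_t$.
\end{itemize}
You work in vertex coordinates and differentiate $s=V|V|/p^{2n-1}$ directly, so $\si=|s|$ comes for free. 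You control $\dot V$ geometrically via $|\nabla_{p_i}V|=\frac1n\vol_{n-1}(F_i)$ and Hadamard.

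The paper's route is intrinsically invariant and mechanical in $n$, but very lossy: about $(n+2)!/e$ terms are bounded one by one, with no cancellation. Yours is much sharper, needs no separate sign argument, and avoids turning the coordinatewise bounds $|\Delta d_{ij}|\le 2\ep$ into a gradient estimate in distance space. Your claim that $s$ is $C^1$ on $\{p>0\}$ is true, but the justification needs one more line. The function $p$ fails to be smooth only where two vertices coincide, and there $V=0$, so $V|V|$ vanishes to second order and absorbs the non-smoothness.

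Two points still need to be settled.

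\textbf{The constant.} Do not hedge on it. Even the crudest form of your estimates yields the bound below. The ingredients are: every edge is at most $2p/n$ by the triangle inequality; Hadamard for $V$ and for each of the $n+1$ facets; and $|\dot p|\le\frac{n(n+1)}{2}\ep$. Together they give
\[
|s(T)-s(Q)|\le\frac{(n+1)(2n+1)}{(n!)^2}\Big(\frac{2}{n}\Big)^{2n-1}\ep .
\]
This is below the claimed $\frac{2^{n+1.5}}{n!\,n^{2n-4}}\ep$ by the factor $\frac{(n+1)(2n+1)\,2^{n-2.5}}{n!\,n^{3}}\le 0.25$ for all $n\ge3$; for example, $\la_3\le0.052$. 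The denominator you get is $(n!)^2n^{2n-1}$, not $n!\,n^{2n-4}$. Without this computation the stated constants are not yet proved; with it they follow with room to spare.

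\textbf{The case $n=2$.} Your plan via Heron's formula controls only $\si$, not $s$, which conflicts with your reduction to $s$. Just run the coordinate argument for $n=2$ as well:
\begin{itemize}
\item the three facet lengths sum to $2p$;
\item your averaging gives $|V|\le\frac29p^2$;
\item $|\dot p|\le3\ep$.
\end{itemize}
Hence $|s(T)-s(Q)|\le\frac89\ep\le2\sqrt3\,\ep$.

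If you keep Heron for $\si$, note that you only control each side separately: $|\Delta a|,|\Delta b|,|\Delta c|\le2\ep$. So the correct estimate is $|\Delta\si|\le2\ep\sup\|\nabla_{(a,b,c)}\si\|_1$. This supremum is at most $9/8$, so the maximization will not ``give $\sqrt3$''.
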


\section{Proofs of strength properties in all dimensions}
\label{sec:proofs}

This section proves Theorem~\ref{thm:strength_properties} by using the Cayley-Menger determinant below.

\begin{dfn}[Cayley-Menger determinant, {\cite[Chapter II, p.98]{blumenthal1970theory}}]
\label{dfn:Cayley-Menger determinant}
Let the simplex $T$ on any points $p_0,\dots,p_{n}\in\R^n$ have the $(n+1)\times(n+1)$ matrix $D_{ij}$ of squared Euclidean distances $d_{ij}^2=|p_i-p_j|^2$ for $i,j=0,\dots,n$.
The $(n+2)\times(n+2)$ matrix $\hat D$ is obtained from $D$ by adding the top row $(0,1,\dots,1)$ and the left column $(0,1,\dots,1)^T$.
The \emph{Cayley-Menger determinant}
 \cite{cayley1841theorem,menger1931new} expresses the squared volume of the simplex $T$ as $\vol^2(T)=\dfrac{(-1)^{n-1}}{2^n(n!)^2}\det\hat D$.
\end{dfn}

\begin{proof}[\textbf{Proof of Theorem~\ref{thm:strength_properties}(a)}]
Any isometry preserves all distances and hence the strength $\si(T)$ expressed via distances in Definition~\ref{dfn:strength_simplex}.
Any special orthogonal matrix $M\in\SO(\R^n)$  keeps the sign of a simplex $T$, i.e. multiplies $\sign(T)$ by $\det M=1$.
Then any rigid motion preserves the signed strength $s(T)=\sign(T)\si(T)$.
To compute $\si(T)$ and $s(T)$, we need the half-perimeter $p(T)$, which requires a quadratic time $O(n^2)$, and then $\sign(T)$ and $\vol^2(T)$ by using determinants of sizes up to $n+2$, which can be calculated in time $O(n^3)$ by matrix diagonalization \cite[section~11.5]{press2007numerical}.
\end{proof}

We prove Theorem~\ref{thm:strength_properties}(b) first for the dimensions $n=1,2$  and then for $n\geq 3$.

\begin{proof}[\textbf{Proof of Theorem~\ref{thm:strength_properties}(b) for $n=1$ and $\la_1=2$}]
For $n=1$, a simplex $T\subset\R$ has two vertices $p_0,p_1$ at a distance $d=|p_0-p_1|$.
Then the strength $\si(T)=2d$ and $s(T)=2(p_1-p_0)$ have the Lipschitz constant $2\la_1=4$.
Indeed, perturbing each of $p_0,p_1$ up to $\ep$ changes the difference $p_1-p_0$ and the distance $d$ up to $2\ep$. 
\end{proof}

Vertical lines are used to denote the determinant $|M|$ of a matrix $M$, the Euclidean length $|v|$ of a vector $v\in\R^n$, and the absolute value $|r|$ of a real number $r$.

\begin{proof}[\textbf{Proof of Lipschitz continuity for $\si$ in Theorem~\ref{thm:strength_properties}(b) for $n=2$, $\la_2=\sqrt{3}$}]
Let a triangle $T\subset\R^2$ have pairwise distances $a,b,c$.
Using the half-perimeter $p=\dfrac{a+b+c}{2}$, the variables $\ti a=p-a$, $\ti b=p-b$, $\ti c=p-c$ are 
expressed via $a,b,c$, so $a=\ti b+\ti c$, $b=\ti a+\ti c$, $c=\ti a+\ti b$, $p=\ti a+\ti b+\ti c$.
The Jacobian of this change of variables is $\left|\bpd{(a,b,c)}{(\ti a,\ti b,\ti c)}\right|=\left| \begin{array}{ccc} 0 & 1 & 1 \\ 1 & 0 & 1 \\ 1 & 1 & 0 \end{array} \right|=2$. 
If every point of $T$ is perturbed up to $\ep$, then any pairwise distance between the vertices of $T$ changes by at most $2\ep$. 
\smallskip

By the mean value theorem \cite{elliott2012probabilistic}, this bound $2\ep$ gives $|\si(T)-\si(Q)|\leq 2\ep\sup|\nabla\si|$, where $\nabla\si=\left(\bpd{\si}{a},\bpd{\si}{b},\bpd{\si}{c}\right)$ is the gradient of the first order partial derivatives of $\si(T)$ with respect to the three distances between points of $T$.
\smallskip

Since $|\nabla\si|=\left|\bpd{(a,b,c)}{(\ti a,\ti b,\ti c)}\cdot
\left(\bpd{\si}{\ti a},\bpd{\si}{\ti b},\bpd{\si}{\ti c}\right)\right|\leq 2\left|\left(\bpd{\si}{\ti a},\bpd{\si}{\ti b},\bpd{\si}{\ti c}\right)\right|$, it remains to estimate the first order partial derivatives of 
$\si=\dfrac{\ti a\ti b\ti c}{(\ti a+\ti b+\ti c)^2}$
 with respect to the variables $\ti a,\ti b,\ti c$.   
Since $\si$ is symmetric in $\ti a,\ti b,\ti c$, it suffices to consider
$$\begin{array}{l}
\bpd{\si}{\ti a}=\dfrac{\ti b\ti c}{(\ti a+\ti b+\ti c)^2}-\dfrac{2\ti a\ti b\ti c}{(\ti a+\ti b+\ti c)^3}=\dfrac{\ti b\ti c(\ti b+\ti c-\ti a)}{(\ti a+\ti b+\ti c)^3}=\\ \\
\dfrac{(p-b)(p-c)(2a-p)}{p^3}=\left(1-\dfrac{b}{p}\right)\left(1-\dfrac{c}{p}\right)\left(2\dfrac{a}{p}-1\right).
\end{array}$$

By the triangle inequalities for the sides $a,b,c$, we have $\max\{a,b,c\}\leq p=\dfrac{a+b+c}{2}$.
Then $\dfrac{a}{p},\dfrac{b}{p},\dfrac{c}{p}\in(0,1]$, and
$1-\dfrac{b}{p},1-\dfrac{c}{p}\in[0,1)$, and 
$2\dfrac{a}{p}-1\in (-1,1]$, so 
$\left|\bpd{\si}{\ti a}\right|\leq 1$.
The similar bounds $\left|\bpd{\si}{\ti b}\right|,\left|\bpd{\si}{\ti c}\right|\leq 1$ imply that $|\nabla\si|\leq 2\sqrt{1^2+1^2+1^2}= 2\sqrt{3}$, so $|\si(T)-\si(Q)|\leq 2\sup|\nabla\si|\ep \leq 2\la_2\ep$ for $\la_2=\sqrt{3}$.
\end{proof}
\medskip

Theorem~\ref{thm:strength_properties}(b) for any $n\geq 3$ will be proved by Lemmas~\ref{lem:edge_ratios}, \ref{lem:determinant_ratio}, and~\ref{lem:determinant_derivative}.
\medskip

\begin{lem}[edge ratios]
\label{lem:edge_ratios}
For any simplex $T$ on points $p_0,\dots,p_n\in\R^n$, we have $\dfrac{|p_i-p_j|}{p(T)}\leq\dfrac{2}{n}$ for all $i,j=0,\dots,n$. 
\end{lem}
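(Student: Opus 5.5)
Fix an edge, say $d=|p_i-p_j|$ with $i\neq j$; the case $i=j$ is trivial. The plan is to bound $d$ by other edges via the triangle inequality, summing over the remaining $n-1$ vertices. First note that $p(T)=\frac12\sum_{i\neq j}|p_i-p_j|$ sums over ordered pairs, so it equals the sum over unordered pairs $\sum_{k<l}|p_k-p_l|$. The claim is therefore $n\,d\le 2\sum_{k<l}d_{kl}$.

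For each vertex $p_k$ with $k\neq i,j$, the triangle inequality gives $d\le |p_i-p_k|+|p_k-p_j|$. There are $n-1$ such vertices. Summing these inequalities gives
$$(n-1)\,d\le \sum_{k\neq i,j}\bigl(|p_i-p_k|+|p_k-p_j|\bigr).$$
The right-hand side consists of distinct edges, none equal to the edge $ij$. Adding $d$ to both sides yields $n\,d\le \sum_{k<l}d_{kl}=p(T)$, which gives $d/p(T)\le 1/n\le 2/n$.

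If instead the paper intends $p(T)$ as half of the unordered-pair sum (as in the triangle case, where $p=(a+b+c)/2$ and the bound $a/p\le 1$ is sharp for $n=2$), the same inequality gives $n\,d\le 2p(T)$, which is exactly $d/p(T)\le 2/n$. Either way the bound follows. The only real care needed is reading the convention for $p(T)$ and checking that the edges in the summed triangle inequalities are all distinct and different from $ij$. I see no genuine obstacle.
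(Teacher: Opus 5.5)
Your proof is correct and is essentially the paper's argument: the paper likewise bounds $2p(T)$ below by $d_{ij}+\sum_{k\neq i,j}(d_{ki}+d_{kj})\geq n\,d_{ij}$, applying the triangle inequality through each of the other $n-1$ vertices. The paper intends your second reading, where $p(T)$ is half the sum over unordered pairs; this matches Heron's formula and $p(T)=d_{01}/2$ for $n=1$, and under it the bound $2/n$ is exactly what the argument yields.
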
  
\begin{proof}
Set $d_{ij}=|p_i-p_j|$ and use triangle inequalities as follows: 
$2p(T)=\sum\limits_{k,l=0}^{n} d_{kl}\geq d_{ij}+\sum\limits_{k\neq i,j} (d_{ki}+d_{kj})\geq d_{ij}+(n-1)d_{ij}=nd_{ij}$.
\end{proof}
\medskip

Recall that the \emph{rencontre} number $r_n=n!\sum\limits_{k=0}^n\dfrac{(-1)^k}{k!}$ counts all permutations of $1,\dots,n$ without a fixed point \cite{charalambides2018enumerative} and equals the integer nearest to $\dfrac{n!}{e}$, e.g. $r_2=1$, $r_3=2$, $r_4=9$, $r_5=44$.

\begin{lem}
\label{lem:determinant_ratio}
In the notations of Definition~\ref{dfn:Cayley-Menger determinant}, we have
$\dfrac{\det\hat D}{p^{2n}(T)}\leq r_{n+2}\left(\dfrac{2}{n}\right)^{2n}$ for any $n\geq 1$.
\end{lem}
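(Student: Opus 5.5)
Expand $\det\hat D$ by the Leibniz formula over all permutations $\pi$ of the $n+2$ row/column indices $\{*,0,1,\dots,n\}$, where $*$ labels the added top row and left column. Every diagonal entry of $\hat D$ vanishes: the corner entry is $0$ and $D_{ii}=d_{ii}^2=0$. Hence any permutation with a fixed point contributes $0$. Only derangements of $n+2$ elements survive, and there are exactly $r_{n+2}$ of them. This gives
$$|\det\hat D|\leq \sum_{\pi\ \text{derangement}}\ \prod_{k}|\hat D_{k\pi(k)}|,$$
and it suffices to bound each product by $\left(\frac{2}{n}\right)^{2n}p^{2n}(T)$.

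Fix a derangement $\pi$. Its product consists of $n+2$ entries. Exactly one entry comes from the top row, namely $\hat D_{*\pi(*)}=1$, since $\pi(*)\neq *$. Exactly one entry comes from the left column, namely $\hat D_{\pi^{-1}(*)*}=1$, since $\pi^{-1}(*)\neq *$. These are two distinct entries, because they lie in different rows. The remaining $n$ entries are squared distances $d_{ij}^2$ with $i\neq j$, $i,j\in\{0,\dots,n\}$. By Lemma~\ref{lem:edge_ratios}, each satisfies $d_{ij}^2\leq \left(\frac{2}{n}\right)^2p^2(T)$. So the product is at most $\left(\frac{2}{n}\right)^{2n}p^{2n}(T)$.

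Summing over the $r_{n+2}$ derangements gives
$$\frac{\det\hat D}{p^{2n}(T)}\leq\frac{|\det\hat D|}{p^{2n}(T)}\leq r_{n+2}\left(\frac{2}{n}\right)^{2n}.$$
When $T$ is totally degenerate, i.e.\ $p(T)=0$, all points coincide and the ratio is not defined, so it is excluded or interpreted as the limit $0$.

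The only delicate step is the counting in the second paragraph. We must confirm that each surviving term contains exactly two entries equal to $1$ and exactly $n$ squared distances, so that the exponent $2n$ matches $p^{2n}$. This holds because the row $*$ and the column $*$ are each hit exactly once by any permutation, and neither hit lands on the corner entry because $\pi$ has no fixed points. The sign factor $(-1)^{n-1}$ from the Cayley--Menger formula plays no role here, because we bound the absolute value.
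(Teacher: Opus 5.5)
Your proof is correct and follows essentially the same route as the paper. You expand $\det\hat D$ by the Leibniz formula, keep only the $r_{n+2}$ derangements because every diagonal entry of $\hat D$ vanishes, and bound each surviving product of $n$ squared distances by $\left(\frac{2}{n}p(T)\right)^{2n}$ via Lemma~\ref{lem:edge_ratios}. You also check explicitly that each derangement picks up exactly two entries equal to $1$, and you bound $|\det\hat D|$ rather than $\det\hat D$; both fill in details the paper leaves implicit, and the absolute-value version is what is actually needed when the lemma is applied in the proof of Theorem~\ref{thm:strength_properties}(b).
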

\begin{proof}
The determinant formula $\det\hat D=\sum\limits_{\xi\in S_{n+2}} (-1)^{\sign(\xi)}\hat D_{1,\xi(1)}\dots\hat D_{n,\xi(n)}$ over all permutations $\xi\in S_{n+2}$ excludes all zeros on the diagonal.
Then $k\neq\xi(k)$ for $k=0,\dots,n+1$, i.e. we can consider only permutations $\xi$ of $0,\dots,n+1$ that have no fixed elements.
The number of such permutations is the rencontre number $r_{n+2}$.
Then $\det\hat D$ is a sum of $r_{n+2}$ non-zero terms, each being a product of $n$ squared distances $d_{ij}$.
The upper bound $d_{ij}\leq\dfrac{2}{n}p(T)$ in Lemma~\ref{lem:edge_ratios} implies that each term in the sum of $\det\hat D$ is at most $\left(\dfrac{2}{n}p(T)\right)^{2n}$.
Then $\dfrac{\det\hat D}{p^{2n}(T)}\leq r_{n+2}\left(\dfrac{2}{n}\right)^{2n}$ as required.
\end{proof}

For $n=1$ in Lemma~\ref{lem:determinant_ratio}, we have $\det\hat D=2d_{01}^2$ and $p(T)=\dfrac{d_{01}}{2}$, so $\dfrac{\det\hat D}{p^{2}(T)}=8\leq r_3(2^2)=8$ as expected.
\medskip

\begin{lem}
\label{lem:determinant_derivative}
In the notations of Definition~\ref{dfn:Cayley-Menger determinant}  for any distinct indices $i,j\in\{1,\dots,n\}$, we have that 
$\left|\bpd{\det\hat D}{d_{ij}}\right|\dfrac{1}{p^{2n-1}(T)}
\leq 4(r_n+r_{n+1}) \left(\dfrac{2}{n}\right)^{2n-1}$.
\end{lem}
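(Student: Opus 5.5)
We differentiate $\det\hat D$ with respect to the single variable $d_{ij}$. Since the squared distance $d_{ij}^2$ appears in exactly two entries of $\hat D$, namely $\hat D_{ij}$ and $\hat D_{ji}$ (with the paper's shift of indices by one for the bordering row and column), the chain rule gives
$$\bpd{\det\hat D}{d_{ij}}=2d_{ij}\left(\bpd{\det\hat D}{\hat D_{ij}}+\bpd{\det\hat D}{\hat D_{ji}}\right)=2d_{ij}\left(C_{ij}+C_{ji}\right),$$
where $C_{ij}$ is the cofactor of the entry $(i,j)$. Since $\hat D$ is symmetric, $C_{ij}=C_{ji}$, so the derivative equals $4d_{ij}C_{ij}$. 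This accounts for the factor $4$ in the statement. The remaining task is to bound $|C_{ij}|$ by a sum of products of squared distances, and then divide by $p^{2n-1}(T)$.

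To bound the cofactor, we expand $C_{ij}$, up to sign, as the sum over permutations $\xi\in S_{n+2}$ with $\xi(i)=j$ of the products $\prod_{k\neq i}\hat D_{k,\xi(k)}$. As in the proof of Lemma~\ref{lem:determinant_ratio}, we keep only the permutations whose remaining factors avoid the zero diagonal, i.e.\ $\xi(k)\neq k$ for all $k\neq i$. We then count these permutations. Each one fixes the image of $i$ and is otherwise fixed-point-free. Deleting $i$ from the cycle containing it gives a bijection onto permutations of the remaining $n+1$ symbols. Two cases arise:
\begin{itemize}
\item if $\xi(j)=i$, so that $i$ and $j$ form a 2-cycle, the remaining $n$ symbols carry a derangement, giving $r_n$ permutations;
\item otherwise, contracting $i\to j$ yields a derangement of $n+1$ symbols, giving $r_{n+1}$ permutations.
\end{itemize}
This is the classical recurrence $r_{m}=(m-1)(r_{m-1}+r_{m-2})$, applied to one fixed choice of $\xi(i)$, so the count is exactly $r_n+r_{n+1}$.

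Next we bound each surviving product. The terms in the expansion of $C_{ij}$ are products of $n+1$ entries of $\hat D$. The row and column of ones contribute factors equal to $1$, so each nonzero term must use the bordering row and column appropriately. A degree count confirms the number of squared distances: the whole determinant $\det\hat D$ is homogeneous of degree $2n$ in the distances, so a cofactor of a distance entry is homogeneous of degree $2n-2$. Each surviving term is therefore a product of $n-1$ squared distances, each at most $\left(\frac{2}{n}p(T)\right)^2$ by Lemma~\ref{lem:edge_ratios}. Together with the factor $d_{ij}\le\frac{2}{n}p(T)$, again by Lemma~\ref{lem:edge_ratios}, this gives
$$\left|\bpd{\det\hat D}{d_{ij}}\right|\le 4(r_n+r_{n+1})\left(\frac{2}{n}p(T)\right)^{2n-1}.$$
Dividing by $p^{2n-1}(T)$ yields the claimed bound.

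The main obstacle is this degree bookkeeping. Lemma~\ref{lem:determinant_ratio} loosely speaks of products of $n$ squared distances, so we must check carefully that each nonzero cofactor term has total distance-degree exactly $2n-2$. We must also check that the $1$-entries from the border do not break the derangement count, that is, that the count $r_n+r_{n+1}$ is taken over the full $(n+2)$-index set with the zero corner included. We would verify both points directly for $n=1,2$ before trusting the general count. If the count turns out slightly larger, it still gives a valid upper bound, since we only need an upper estimate.
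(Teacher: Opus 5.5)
Your proof is correct and is essentially the paper's argument in cofactor form. In the paper's notation $\det\hat D=\alpha d_{ij}^4+\beta d_{ij}^2+\gamma$, your cofactor is $C_{ij}=\alpha d_{ij}^2+\beta/2$, and your two cases are exactly the paper's counts: the 2-cycle $(i\,j)$ gives $r_n$ terms (the paper's count for $\alpha$), the rest give $r_{n+1}$ terms (its count for each of the two determinants summing to $\beta$), and both then apply Lemma~\ref{lem:edge_ratios} to every term. The degree bookkeeping you flag should be justified term by term, not by homogeneity of the whole determinant: each surviving permutation takes exactly one $1$ from the border row and one from the border column, since the zero corner is excluded, so each term of $C_{ij}$ has exactly $n-1$ squared-distance factors, as you claimed.
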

\begin{proof}
Since $\det\hat D$ has $d_{ij}^2$ in exactly two cells in different rows and columns, $\det\hat D$ is a quadratic polynomial $\al d_{ij}^4+\be d_{ij}^2+\ga$ for some $\al,\be,\ga$ that depend on other fixed distances $d_{kl}\neq d_{ij}$.
Then $\bpd{\det\hat D}{d_{ij}}=4\al d_{ij}^3+2\be d_{ij}$.
The coefficient $\al$ is the determinant of the $n\times n$ submatrix $(b_{ij})$ obtained from $\hat D$ by removing two rows and columns indexed by $i+2,j+2$.
For example, fix $i=0$ and $j=1$.
If $n=2$, then $\al=\left| \begin{array}{cc} 
0 & 1 \\
1 & 0 
\end{array}\right|=-1$.
If $n=3$,  then $\al=\left| \begin{array}{ccc} 
0 & 1 & 1 \\
1 & 0 & d_{23}^2 \\
1 & d_{32}^2 & 0  
\end{array}\right|=2d_{23}^2$.
Since the matrix $(b_{ij})$ has zeros on the main diagonal, its determinant $\al=\sum\limits_{\xi\in S_n} (-1)^{\sign(\xi)}b_{1,\xi(1)}\dots b_{n,\xi(n)}$ is a sum over all permutations $\xi\in S_n$ 
with no fixed points.
Then the sum $\al$ has $r_n$ non-zero products $b_{1,\xi(1)}\dots b_{n,\xi(n)}$ and the total degree $2(n-2)$ in all distances $d_{kl}\neq d_{ij}$.
After dividing the polynomial $\al d_{ij}^3$ of the degree $2n-1$ by $p^{2n-1}(T)$,
we use the upper bound $\dfrac{d_{kl}}{p(T)}\leq\dfrac{2}{n}$ in Lemma~\ref{lem:edge_ratios} to get $\dfrac{|\al|d_{ij}^3}{p^{2n-1}(T)}\leq r_n\left(\dfrac{2}{n}\right)^{2n-1}$.
\smallskip

The coefficient $\be$ in $\det\hat D=\al d_{ij}^4+\be d_{ij}^2+\ga$ is the sum of products from the determinants of the two 
submatrices obtained from $\hat D$ by removing row $i+2$ and column $j+2$ (for one submatrix), then row $j+2$ and column $i+2$ (for another submatrix).
If $n=3$, $i=0$, $j=2$, the determinants are
$\left| \begin{array}{cccc} 
0 & 1 & 1 & 1  \\
1 & d_{10}^2 & 0 & d_{13}^2 \\
1 & d_{20}^2 & d_{21}^2 & d_{23}^2 \\
1 & d_{30}^2 & d_{32}^2 & 0 
\end{array} \right|+
\left| \begin{array}{cccc} 
0 & 1 & 1 & 1 \\
1 & d_{01}^2 & d_{02}^2 & d_{03}^2 \\
1 & 0 & d_{12}^2 & d_{13}^2 \\
1 & d_{31}^2 & d_{32}^2 & 0 
\end{array} \right|$.
\smallskip

Since each $(n+1)\times(n+1)$ submatrix includes one entry $d_{ij}^2$, we exclude all products with this entry, which are multiplied by the removed $d_{ij}^2$ from $\hat D$ and hence were counted in $\al d_{ij}^4$.
Hence we can replace $d_{02}=d_{02}$ with 0 and get
$\be=\left| \begin{array}{cccc} 
0 & 1 & 1 & 1  \\
1 & d_{10}^2 & 0 & d_{13}^2 \\
1 & 0 & d_{21}^2 & d_{23}^2 \\
1 & d_{30}^2 & d_{32}^2 & 0 
\end{array} \right|+
\left| \begin{array}{cccc} 
0 & 1 & 1 & 1 \\
1 & d_{01}^2 & 0 & d_{03}^2 \\
1 & 0 & d_{12}^2 & d_{13}^2 \\
1 & d_{31}^2 & d_{32}^2 & 0 
\end{array} \right|$.
Up to a permutation of indices, each submatrix can be rewritten with the diagonal that has one original $d_{ij}^2$ (now replaced with 0), while all other diagonal elements are initial zeros. 
The example above gives
$\be=-\left| \begin{array}{cccc} 
0 & 1 & 1 & 1  \\
1 & 0 & d_{21}^2 & d_{23}^2 \\
1 & d_{10}^2 & 0 & d_{13}^2 \\
1 & d_{30}^2 & d_{32}^2 & 0 
\end{array} \right|-
\left| \begin{array}{cccc} 
0 & 1 & 1 & 1 \\
1 & 0 & d_{12}^2 & d_{13}^2 \\
1 & d_{01}^2 & 0 & d_{03}^2 \\
1 & d_{31}^2 & d_{32}^2 & 0 
\end{array} \right|$.
Similarly to the argument for the determinant $\al$,
the sum $\be d_{ij}$ contains $2r_{n+1}$ products of the total degree $2n-1$, so $\dfrac{|\be| d_{ij}}{p^{2n-1}(T)}\leq 2r_{n+1}\left(\dfrac{2}{n}\right)^{2n-1}$ by Lemma~\ref{lem:edge_ratios}.
Then the required inequality follows:
$\left|\bpd{\det\hat D}{d_{ij}}\right|\dfrac{1}{p^{2n-1}(T)}=\dfrac{|4\al d_{ij}^3+2\be d_{ij}|}{p^{2n-1}(T)}\leq (4r_n+4r_{n+1}) \left(\dfrac{2}{n}\right)^{2n-1}$.
\end{proof}
\medskip

\begin{proof}[\textbf{Proof of Theorem~\ref{thm:strength_properties}(b) for $n\geq 3$}]
For $n=3$, the squared volume is 
$\vol^2(T)=\dfrac{1}{288}
\left| \begin{array}{ccccc} 
0 & 1 & 1 & 1 & 1 \\
1 & 0 & d_{01}^2 & d_{02}^2 & d_{03}^2 \\
1 & d_{10}^2 & 0 & d_{12}^2 & d_{13}^2 \\
1 & d_{20}^2 & d_{21}^2 & 0 & d_{23}^2 \\
1 & d_{30}^2 & d_{31}^2 & d_{32}^2 & 0 
\end{array} \right|$.
Similarly to the case $n=2$, the mean value theorem
\cite{elliott2012probabilistic} for the strength $\si(T)=\dfrac{\vol^2(T)}{p^{2n-1}(T)}$ 
implies that
$|\si(T)-\si(Q)|\leq 2\ep\sup|\nabla\si|\leq 
2\ep\sqrt{ \sum\limits_{i\neq j}\sup\left|\bpd{\si}{d_{ij}}\right|^2}\leq 
2\ep\sqrt{\dfrac{n(n+1)}{2}}\max\limits_{i\neq j}\sup\left|\bpd{\si}{d_{ij}}\right|.$
To find an upper bound of $\left|\bpd{\si}{d_{ij}}\right|$, 
we initially ignore the numerical factor in the square volume $\vol^2(T)=\dfrac{(-1)^{n-1}}{2^n(n!)^2}\det\hat D$ and differentiate $\det\hat D\cdot\dfrac{1}{p^{2n-1}(T)}$ by the product rule:
$$\bpd{}{d_{ij}}\left(\dfrac{\det\hat D}{p^{2n-1}(T)}\right)=
\bpd{\det\hat D}{d_{ij}}\cdot\dfrac{1}{p^{2n-1}(T)}-\dfrac{\det\hat D}{p^{2n}(T)}\cdot\dfrac{2n-1}{2}.$$
Lemmas~\ref{lem:determinant_ratio} and \ref{lem:determinant_derivative} imply the upper bound 
$$\left|\bpd{}{d_{ij}}\left(\dfrac{\det\hat D}{p^{2n-1}(T)}\right)\right|
\leq(4r_n+4r_{n+1}) \left(\dfrac{2}{n}\right)^{2n-1}
+\left(n-\dfrac{1}{2}\right)r_{n+2}\left(\dfrac{2}{n}\right)^{2n}<$$
$<\left(4r_n+4r_{n+1}+2r_{n+2}\right)\left(\dfrac{2}{n}\right)^{2n-1}$.
Taking into account the factors $\dfrac{(-1)^{n-1}}{2^n(n!)^2}$ in $\vol^2(T)$ and $\sqrt{\dfrac{n(n+1)}{2}}$ for estimating the length of the gradient $\nabla\si$ of $\dfrac{n(n+1)}{2}$ first order partial derivatives $\bpd{\si}{d_{ij}}$, 
we get $$|\si(T)-\si(Q)|\leq 2\ep\sqrt{\dfrac{n(n+1)}{2}}\max\limits_{i\neq j}\sup\left|\bpd{\si}{d_{ij}}\right|\leq 2\ep c_n$$ for
the upper bound 
$$\begin{array}{l}
c_n=
\dfrac{\sqrt{n(n+1)/2}}{2^n(n!)^2}\left(4r_n+4r_{n+1}+2r_{n+2}\right)\left(\dfrac{2}{n}\right)^{2n-1}=\\
=(2r_n+2r_{n+1}+r_{n+2})\dfrac{2^{n-0.5}\sqrt{n+1}}{(n!)^2n^{2n-1.5}}.
\end{array}$$
The estimate $r_n\leq\dfrac{n!}{2}$ gives 
$$\begin{array}{l}
c_n<\dfrac{n!}{2}\Big(2+2(n+1)+(n+1)(n+2)\Big)\dfrac{2^{n-0.5}\sqrt{n+1}}{(n!)^2n^{2n-1.5}}= \\
=(n^2+5n+6)\sqrt{n+1}\dfrac{2^{n-1.5}}{(n!) n^{2n-1.5}}
=\left(1+\dfrac{5}{n}+\dfrac{6}{n^2}\right)\sqrt{1+\dfrac{1}{n}}\dfrac{2^{n-1.5}}{(n!) n^{2n-4}}.
\end{array}$$
For $n\geq 3$, we get $1+\dfrac{5}{n}+\dfrac{6}{n^2}
\leq\dfrac{10}{3}$ and $\sqrt{1+\dfrac{1}{n}}\leq\dfrac{2}{\sqrt{3}}$, so \\
$c_n<\dfrac{20}{3\sqrt{3}}\cdot \dfrac{2^{n-1.5}}{(n!) n^{2n-4}}= \dfrac{5\sqrt{2}}{3\sqrt{3}}\dfrac{2^{n}}{(n!) n^{2n-4}}
<\dfrac{2^{n+0.5}}{(n!) n^{2n-4}}=b_n.$ 
\end{proof}
\medskip

\begin{proof}[Proof of Lipschitz constants for the signed strength in Theorem~\ref{thm:strength_properties}(b)]
If simplices $T,Q$ have $\sign(T)=\sign(Q)$, then $|s(T)-s(Q)|=|\si(T)-\si(Q)|\leq 2\la_n\ep$.
\medskip

If simplices $T,Q$ have opposite signs, we will prove that $|s(T)-s(Q)|\leq 4\la_n\ep$.
Any simplex $T$ can be connected to its $\ep$-perturbation $Q$ by a straight-line deformation through simplices $Q_t\subset\R^n$, where $t\in[0,1]$, $Q_0=T$, and $Q_1=Q$.
\medskip

There is an intermediate value $t\in (0,1)$ such that $\si(Q_t)=0=\sign(Q_t)$.
Then both $T,Q$ are deformations of the intermediate simplex $Q_t$ such that all corresponding vertices are perturbed up to Euclidean distances $t\ep$ and $(1-t)\ep$, respectively.
The inequalities $|s(T)|=|\si(T)-\si(Q_t)|\leq 2\la_n t\ep$ and
$|s(Q)|=|\si(Q_t)-\si(Q)|\leq 2\la_n(1-t)\ep$ imply that $|s(T)-s(Q)|\leq |\si(T)-\si(Q_t)|+|\si(Q_t)-\si(Q)|\leq 2\la_n\ep$.
\end{proof}

In conclusion, the strength of a simplex provides a Lipschitz continuous analog of a volume, which is measured in the units of original point coordinates.
We thank Janos Pach for helpful discussions of this paper leading to forthcoming joint work. 
\medskip

Data availability statement.
All data for this research is included in the paper.

\bibliography{DCG2026strength}

\end{document}